\documentclass[12pt]{amsart}
\usepackage[colorlinks=true, pdfstartview=FitV, linkcolor=blue, citecolor=blue, urlcolor=blue]{hyperref}
\usepackage{amssymb}
\textwidth=1.35\textwidth
\textheight=1.22\textheight
\calclayout

\def\C{\mathbb {C}}
\def\lie#1{\mathfrak{ #1}}

\def\lieg{\lie g}

\def\liet{\lie t}
\def\inv{^{-1}}
\newcommand{\Aut}{\operatorname{Aut}}

\newcommand{\SL}{\operatorname{SL}}

\newcommand{\SO}{\operatorname{SO}}
\newcommand{\Orth}{\operatorname{O}}
\def\GL{\operatorname{GL}}

\def\ad{\operatorname{ad}}
\def\Ad{\operatorname{Ad}}
\def\phi{\varphi}



\numberwithin{equation}{subsection}

\newtheorem{theorem}[subsection]{Theorem}

\newtheorem{proposition}[subsection]{Proposition}
\newtheorem{corollary}[subsection]{Corollary}

\theoremstyle{definition}

\theoremstyle{remark}

\title[Linear maps preserving invariants]{\boldmath Linear maps preserving invariants} 
 \author{Gerald W. Schwarz}
\thanks{Partially supported by NSA Grant H98230-06-1-0023}
\address{Department of Mathematics\\
Brandeis University\\
Waltham, MA 02454-9110}
\email{schwarz@brandeis.edu}
\subjclass{20G20, 22E46, 22E60}
\keywords{Invariant polynomials}

\begin{document}
\begin{abstract}
Let $G\subset\GL(V)$ be a complex reductive group. Let $G'$ denote $\{\phi\in\GL(V)\mid p\circ\phi=p\text{ for all }p\in\C[V]^G\}$. We show that,  in general,   $G'=G$. In case  $G$ is the adjoint group of a simple Lie algebra $\lieg$, we show that $G'$ is an order 2 extension of $G$.  We also calculate $G'$ for all representations of $\SL_2$.
\end{abstract}

\maketitle
\section{Introduction}

Our base field is $\C$, the field of complex numbers. Let $G\subset\GL(V)$ be a reductive group.
Let $G'=\{\phi\in\GL(V)\mid p\circ \phi=p\text{ for all }p\in\C[V]^G\}$. Several authors have studied the problem of determining $G'$. If $G$ is finite, then one easily sees that $G'=G$.
Solomon \cite{Sol1,Sol2} has classified many triples consisting of reductive groups
$H\subset G$ and a $G$-module $V$ such that $\C(V)^H=\C(V)^G$ (rational invariant functions). If $G$ and $H$ are semisimple, then this is the same thing as finding triples where we have equality of the polynomial invariants: $\C[V]^H=\C[V]^G$. We show that for ``general'' faithful $G$-modules $V$ we have that $G=G'$. We also compute $G'$ for all representations of $\SL_2$.

First we study the case that $G$ is the adjoint group of a simple Lie algebra $\lieg$. Our interest in this case is due to the paper of Ra{\"\i}s \cite{Rais1} where the question of determining $G'$ is raised. The case that $\lieg=\lie{sl}_n$ was also settled by him \cite{Rais2}, where it is shown that $G'/G$ is generated by the mapping $\lie{sl}_n\ni X \mapsto X^t$ where $X^t$ denotes the transpose of $X$. In \S 2  we show that, in general, $G'/G$ is generated by the element $-\psi$ where $\psi\colon\lieg\to\lieg$ is a certain automorphism of $\lieg$ of order 2. In the case of $\lie{sl}_n$, $\psi(X)=-X^t$, so that our result reproduces that of Ra{\"\i}s. The computation of $G'$ for $\lieg$ semisimple follows easily from the case that $\lieg$ is simple.  In \S 3  we prove our result that $G=G'$ for general $G$ and general $G$-modules $V$. In \S 4 we consider representations of $\SL_2$.

We thank M.~ Ra{\"\i}s for bringing his paper \cite{Rais1} to our attention and we thank him and  D.~Wehlau for help and advice.

\section{The adjoint case}

\begin{proposition} Let $\lieg$ be a semisimple Lie algebra, so we have  $\lieg=\lieg_1\oplus\dots\oplus\lieg_r$ where the $\lieg_i$ are simple ideals.
Let $\phi\in G'$. Then $\phi(\lieg_i)=\lieg_i$ for all $i$, and $\phi|_{\lieg_i}=\pm\sigma_i$   where  $\sigma_i$ is an automorphism of $\lieg_i$.
\end{proposition}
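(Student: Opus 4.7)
The plan is a reduction to the case where $\lieg$ is simple. In the first step I would show that $\phi$ respects the decomposition $\lieg=\lieg_1\oplus\cdots\oplus\lieg_r$; the second claim, that each restriction is $\pm$ an automorphism, I would derive from the simple-case analysis that occupies the remainder of the section.

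For the first step, the key tool is the Killing form. Let $B$ be the Killing form of $\lieg$ and $B_i$ that of $\lieg_i$, and let $p_i\colon\lieg\to\lieg_i$ be the projection. The decomposition is orthogonal: $B(x,y)=\sum_i B_i(p_i x,p_i y)$. Each quadratic polynomial $q_i(x):=B_i(p_i x,p_i x)$ is the pull-back of a $G_i$-invariant on $\lieg_i$, hence lies in $\C[\lieg]^G$. Because $\phi\in G'$ satisfies $q_i\circ\phi=q_i$, polarization shows that $\phi$ preserves the symmetric bilinear form associated with $q_i$, and so carries its radical to itself. Since $B_i$ is non-degenerate on $\lieg_i$, that radical is precisely $\bigoplus_{j\neq i}\lieg_j$. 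Intersecting these $\phi$-invariant subspaces yields $\phi(\lieg_i)=\lieg_i$ for every $i$.

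Consequently $\phi=\phi_1\oplus\cdots\oplus\phi_r$ with $\phi_i=\phi|_{\lieg_i}\in\GL(\lieg_i)$. Because $G=G_1\times\cdots\times G_r$ acts factor-wise, $\C[\lieg]^G=\bigotimes_i\C[\lieg_i]^{G_i}$, so the condition $p\circ\phi=p$ decouples factor by factor: each $\phi_i$ must satisfy $p_i\circ\phi_i=p_i$ for every $p_i\in\C[\lieg_i]^{G_i}$. Thus $\phi_i$ lies in the analogous group $G_i'$ for the adjoint action on the simple Lie algebra $\lieg_i$.

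The final assertion that such a $\phi_i$ has the form $\pm\sigma_i$ with $\sigma_i\in\Aut(\lieg_i)$ is precisely the simple-adjoint case, which is the main theorem of the section and is established in what follows. The present proposition is therefore a reduction step; the only essentially new work is the Killing-form radical argument, which is straightforward given non-degeneracy of $B_i$ on each simple ideal. The main obstacle is thus deferred to the simple case treated in the rest of \S 2.
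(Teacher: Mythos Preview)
Your Step~1 is correct and is actually a different route from the paper's: the paper never uses the radical-of-the-Killing-form argument to show $\phi(\lieg_i)=\lieg_i$. Instead it invokes Dixmier's theorem, which says that the Lie algebra of $G'$ equals $\ad(\lieg)$. Hence $\phi$ normalizes $\ad(\lieg)\simeq\lieg$, so conjugation by $\phi$ gives an automorphism $\sigma$ of $\lieg$ with $\phi\circ\ad X\circ\phi^{-1}=\ad\sigma(X)$. One checks that $\sigma$ also acts trivially on $\C[\lieg]^G$, which forces $\sigma$ to fix each simple ideal $\lieg_i$. Then $\sigma^{-1}\phi$ commutes with $\ad(\lieg)$, so by Schur it is a scalar $\lambda_i$ on each $\lieg_i$, and the degree-$2$ Killing invariant forces $\lambda_i=\pm 1$. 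This yields $\phi|_{\lieg_i}=\pm\sigma_i$ \emph{directly}, with no forward reference.

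Your Step~2, by contrast, has a genuine gap: the deferral is circular. This proposition is not a reduction lemma that feeds into a later independent proof of the simple case; it is the \emph{starting point} for everything that follows. The sentence just after the proposition reads ``Since $\phi=\pm\sigma$ is the identity on $\liet$ \dots,'' and the proof of the main theorem begins ``If $\phi=\sigma\in\Aut(\lieg)$ \dots\ If $\phi=-\sigma$ \dots.'' All of Corollary~2.2, Proposition~2.3, and Theorem~2.5 take $\phi=\pm\sigma$ as their hypothesis. So you cannot cite them to obtain $\phi_i=\pm\sigma_i$; that is precisely what the present proposition must supply. The missing idea in your argument is Dixmier's theorem (or some substitute) to pin down $\operatorname{Lie}(G')$ and hence produce the automorphism $\sigma$; your Killing-form argument alone does not show that $\phi_i$ is, up to sign, a \emph{Lie algebra} automorphism of $\lieg_i$.
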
 

\begin{proof}
By a theorem of Dixmier \cite{Dix79} we know that the Lie algebra of  $G'$ is $\ad(\lieg)\subset\lie{gl}(\lieg)$. Thus $\phi$ acts on $\ad(\lieg)\simeq\lieg$ via an automorphism $\sigma$ where  $\phi\circ\ad X\circ \phi\inv=\ad\sigma(X)$ for $X\in\lieg$.  Since $\phi$ induces the identity on $\C[\lieg]^G$, so does $\sigma$, and it follows that  $\sigma=\prod_i\sigma_i$ where $\sigma_i\in\Aut(\lieg_i)$, $i=1,\dots,r$.
By Schur's lemma,  $\phi\circ\sigma\inv$ restricted to   $\lieg_i$  is multiplication by some scalar $\lambda_i\in\C^*$,  $i=1,\dots,r$. Since $\Aut(\lieg_i)$ and $G'$ preserve  the invariant of degree 2 corresponding to the Killing form on each $\lieg_i$ we must have that $\lambda_i=\pm 1$, $i=1,\dots,r$.
\end{proof}

From now on we assume   that $\lieg$ is simple. Let $\sigma\in\Aut(\lieg)$. Then we know that, up to multiplication by an element of $G=\Aut(\lieg)^0$, we can arrange that $\sigma$ preserves a fixed Cartan subalgebra $\liet\subset \lieg$. Thus we may assume that $\phi$ preserves $\liet$. Let $T$ denote the corresponding maximal torus of $G$.

\begin{corollary}
 We may modify $\phi$ by an element of $G$ so that $\phi$ is the identity on $\liet$.
 \end{corollary}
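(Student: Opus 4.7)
The plan is to restrict $\phi$ to $\liet$, show that the restricted map must coincide with an element of the Weyl group $W=N_G(T)/T$, and then modify $\phi$ by a lift of that Weyl group element to $N_G(T)\subset G$.

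First, because $\phi\in G'$ preserves $\liet$ and satisfies $p\circ\phi=p$ for every $p\in\C[\lieg]^G$, the restriction $\phi|_\liet$ preserves every polynomial in the image of the restriction map $\C[\lieg]^G\to\C[\liet]$. By the Chevalley restriction theorem this image is exactly $\C[\liet]^W$, so $\phi|_\liet$ lies in the group of linear automorphisms of $\liet$ that preserve $\C[\liet]^W$.

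Since $W$ is finite, the elementary observation $W'=W$ recorded in the introduction forces $\phi|_\liet=w$ for some $w\in W$. Picking a lift $n\in N_G(T)$ of $w$ yields $\Ad(n)|_\liet=w$, so replacing $\phi$ by $\Ad(n)\inv\circ\phi$ produces a new element of $G'$ that still preserves $\liet$ and now acts as the identity there.

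There is no real obstacle: the argument is essentially formal once Chevalley restriction is invoked, and the $\pm$-sign ambiguity furnished by the proposition plays no role here, since we only need $\phi|_\liet$ to preserve the Weyl-invariant polynomials in order to conclude $\phi|_\liet\in W$.
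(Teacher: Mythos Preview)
Your proof is correct and follows the same route as the paper: Chevalley restriction forces $\phi|_\liet$ to preserve $\C[\liet]^W$, hence to lie in $W$ (using that $W'=W$ for finite $W$), and then one corrects by a lift in $N_G(T)\subset G$. The paper's argument is identical, only stated more tersely and without explicitly invoking the $W'=W$ fact for finite groups.
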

 
 \begin{proof} By Chevalley's theorem,  restriction to $\liet$ gives an isomorphism of  $\C[\lieg]^G$ with $\C[\liet]^W$ where $W$ is the Weyl group of $\lieg$. Thus the restriction of $\phi$ to $\liet$ coincides with an element of $W$, where every element of $W$ is the restriction of an element of $G$ stabilizing $\liet$. Thus we may assume that $\phi$ is the identity on $\liet$. 
   \end{proof}

Let $\Phi$ be the set of roots and  $\Phi^+$ a choice of positive roots. Let $\Pi$ denote the set of simple roots.  Since $\phi=\pm\sigma$ is the identity on $\liet$, $\sigma(x)=c_\sigma x$ for all $x\in\liet$  where  $c_\sigma=\pm 1$. Hence either $\sigma$ sends each $\lieg_\alpha$ to itself or it sends each $\lieg_\alpha$ to $\lieg_{-\alpha}$, $\alpha\in \Phi$.  Choose nonzero elements $x_\alpha\in\lieg_\alpha$, $\alpha\in\Pi$, and choose elements $y_\alpha\in\lieg_{-\alpha}$ such that $(x_\alpha,y_\alpha,[x_\alpha,y_\alpha])$ is an $\lie{sl}_2$-triple. Let $\psi$ denote the unique order 2 automorphism of $\lieg$ such that $\psi(x)=-x$, $x\in\liet$ and $\psi(x_\alpha)=-y_\alpha$, $\alpha\in\Pi$ (see \cite[14.3]{Hum72}).

\begin{proposition}\label{prop:csigma}
\begin{enumerate}
\item If $c_\sigma=1$, then $\sigma$ is inner.  
\item If $c_\sigma=-1$, then   $\sigma$ differs from $\psi$ by an element of $\Ad(T)$.
\end{enumerate}

\end{proposition}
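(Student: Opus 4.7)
The plan is to reduce both parts of the proposition to a single preliminary claim: every automorphism of $\lieg$ that is the identity on $\liet$ belongs to $\Ad(T)$. Once this claim is in hand, the two cases fall out immediately. In case (1), $\sigma$ is the identity on $\liet$, so the claim gives $\sigma=\Ad(t)\in G$, i.e., $\sigma$ is inner. In case (2), both $\sigma$ and $\psi$ act as $-1$ on $\liet$, so $\sigma\psi$ is the identity there; the claim then yields $\sigma\psi=\Ad(t)$ for some $t\in T$, and $\psi^{2}=1$ rewrites this as $\sigma=\Ad(t)\psi$, i.e., $\sigma$ differs from $\psi$ by an element of $\Ad(T)$.

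To prove the preliminary claim, I would first observe that any automorphism $\tau$ fixing $\liet$ pointwise commutes with $\ad h$ for every $h\in\liet$, so $\tau$ preserves each root space and acts there by some scalar: $\tau(x_\alpha)=c_\alpha x_\alpha$ with $c_\alpha\in\C^*$. Applying $\tau$ to the bracket relations $[x_\alpha,x_\beta]=N_{\alpha,\beta}x_{\apb}$ (when $\apb$ is a root) and $[x_\alpha,x_{-\alpha}]\in\liet$ yields the multiplicative identities $c_{\apb}=c_\alpha c_\beta$ and $c_{-\alpha}=c_\alpha^{-1}$. The next step is to promote the map $\alpha\mapsto c_\alpha$ on $\Phi$ to a character of the root lattice $Q$: define a homomorphism $c\colon Q\to\C^*$ by its values $c_{\alpha_i}$ on the simple roots, and check by induction along a chain of positive roots (each of whose partial sums is a root) that $c(\alpha)=c_\alpha$ for every $\alpha\in\Phi$. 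Since $G$ is the adjoint group, the character group of $T$ coincides with $Q$, so this character is realized by a unique $t\in T$ with $\alpha(t)=c_\alpha$ for all $\alpha\in\Phi$, and then $\tau=\Ad(t)$.

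The main obstacle I expect is the combinatorial step of promoting the root-level scalars $c_\alpha$ to a homomorphism defined on all of $Q$; this is a routine but slightly fiddly chain-of-roots argument that needs the existence of a suitable chain to reach each $\alpha\in\Phi$ from the simple roots. The remaining steps are formal: the reduction to the preliminary claim is immediate, and the passage from the character of $Q$ to a torus element uses crucially and cleanly that $G$ is the adjoint group.
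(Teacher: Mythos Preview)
Your proposal is correct and follows the same overall strategy as the paper: reduce to the claim that any automorphism of $\lieg$ fixing $\liet$ pointwise lies in $\Ad(T)$, then handle the two cases by applying this to $\sigma$ or to $\sigma\psi$. The only difference is in how the preliminary claim is established. You propose to extend the scalars $c_\alpha$ to a homomorphism $Q\to\C^*$ via a chain-of-roots induction, and you flag this as the ``main obstacle.'' The paper bypasses this step entirely: it simply chooses $t\in T$ with $\alpha(t)=c_\alpha$ for the \emph{simple} roots $\alpha\in\Pi$ (possible because $G$ is the adjoint group, so the simple roots are a basis of the character lattice of $T$), and then notes that $\tau$ and $\Ad(t)$ are Lie algebra automorphisms agreeing on $\liet$ and on each $x_\alpha$, $\alpha\in\Pi$; since the bracket relation $[x_\alpha,y_\alpha]\in\liet$ forces agreement on the $y_\alpha$ as well, and these Chevalley generators generate $\lieg$, one gets $\tau=\Ad(t)$ at once. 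So your anticipated obstacle never arises in the paper's version.
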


\begin{proof}
If $c_\sigma=1$, then $\sigma(x_\alpha)=c_\alpha x_\alpha$, $c_\alpha\in\C$, $\alpha\in\Pi$. There is a $t\in T$ such that $\Ad(t)(x_\alpha)=c_\alpha x_\alpha$, $\alpha\in \Pi$. It follows that $\sigma=\Ad(t)\in G$.  If $c_\sigma=-1$, we can modify $\sigma$ by an element of $T$ so that it becomes $\psi$.
\end{proof}

   \begin{proposition} \label{prop:inner} Let $\lieg$ be simple.
 Then the following are equivalent.
  \begin{enumerate}
\item Every representation of $\lieg$ is self-dual.\label{selfdual}
\item The automorphism $\psi$ is inner. \label{inner}
\item The generators of $\C[\lieg]^G$ have even degree.\label{evendegree}
\item $\lieg$ is of  the following type: \label{type}
\begin{enumerate}
\item  $\mathsf{B_n}$, $n\geq 1$, 
\item  $\mathsf{C_n}$, $n\geq 3$,
\item  $\mathsf{D_{2n}}$, $n\geq 2$, 
\item  $\mathsf{E_7}$, $\mathsf{E_8}$, $\mathsf{F_4}$ or $\mathsf{G_2}$.
\end{enumerate}
\end{enumerate}
\end{proposition}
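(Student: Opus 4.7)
The plan is to show all four statements are equivalent to the single condition $w_0 = -1$ on $\liet$, i.e., the longest Weyl group element acts as $-I$ on the Cartan subalgebra (equivalently, $-I \in W$). The equivalence of (\ref{type}) with this condition is then a textbook case-by-case check.

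For (\ref{selfdual}) $\Leftrightarrow$ ($-I \in W$): The dual of the irreducible representation of highest weight $\lambda$ has highest weight $-w_0(\lambda)$. So every representation is self-dual iff $-w_0 = \mathrm{id}$ on the weight lattice. Since $w_0$ is the unique element of $W$ sending $\Phi^+$ to $\Phi^-$, we have $w_0 = -I$ iff $-I \in W$.

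For (\ref{inner}) $\Leftrightarrow$ ($-I \in W$): The automorphism $\psi$ acts as $-I$ on $\liet$. If $\psi$ is inner, I apply Proposition~2.4: up to an inner automorphism, $\psi$ still preserves $\liet$ and its restriction there comes from $N_G(T)/T = W$, yielding $-I \in W$. Conversely, given $n \in N_G(T)$ with $\Ad(n)|_\liet = -I$, the inner automorphism $\Ad(n)$ necessarily sends $\lieg_\alpha$ to $\lieg_{-\alpha}$ for every $\alpha$, so by the argument used in the proof of Proposition~\ref{prop:csigma}(2) we can modify it by an element of $T$ to match $\psi$, proving $\psi$ inner.

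For (\ref{evendegree}) $\Leftrightarrow$ ($-I \in W$): By Chevalley restriction, $\C[\lieg]^G \cong \C[\liet]^W$, a polynomial ring on homogeneous generators of degrees $d_1,\dots,d_r$. The element $-I \in \GL(\liet)$ scales a homogeneous polynomial of degree $d$ by $(-1)^d$, so $-I$ fixes $\C[\liet]^W$ pointwise iff every $d_i$ is even. On the other hand, by Chevalley--Shephard--Todd the extension $\C[\liet]/\C[\liet]^W$ is finite of degree $|W|$ with $W$-torsor generic fibers; consequently the subgroup of $\GL(\liet)$ fixing $\C[\liet]^W$ pointwise is exactly $W$. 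Therefore $-I$ fixes $\C[\liet]^W$ pointwise iff $-I \in W$.

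Finally, (\ref{type}) follows from the classification of root systems with $-I \in W$: one verifies type-by-type that $w_0 = -I$ precisely for $\mathsf{B_n}, \mathsf{C_n}, \mathsf{D_{2n}}, \mathsf{E_7}, \mathsf{E_8}, \mathsf{F_4}, \mathsf{G_2}$ (noting that in types $\mathsf{A_n}$ ($n\ge2$), $\mathsf{D_{2n+1}}$, and $\mathsf{E_6}$ the longest element acts as a nontrivial diagram automorphism composed with $-I$). The main technical step is probably the identification of the stabilizer of $\C[\liet]^W$ inside $\GL(\liet)$ with $W$ itself, needed for (\ref{evendegree}) $\Rightarrow$ ($-I \in W$); everything else is a direct application of standard structure theory.
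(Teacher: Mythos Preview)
Your proof is correct and follows essentially the same route as the paper: both hinge on the condition $w_0=-I$ on $\liet$ (the paper writes $\rho$ for the longest element) and establish (\ref{inner}) $\Leftrightarrow$ (\ref{selfdual}) via exactly the argument you give, while the paper simply declares the equivalence of (\ref{selfdual}), (\ref{evendegree}), (\ref{type}) to be ``well-known.'' Your explicit treatment of (\ref{evendegree}) via the fact that $W$ is precisely the stabilizer of $\C[\liet]^W$ in $\GL(\liet)$ is a clean addition that the paper omits.
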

  
\begin{proof}
The equivalence of \eqref{selfdual}, \eqref {evendegree} and\eqref{type} is well-known. Now given a highest weight vector $\lambda$ of $\lieg$, the highest weight vector of the corresponding dual representation $V(\lambda)^*$ is $-\rho(\lambda)$ where $\rho$ is the unique element of the Weyl group $W$ which sends $\Phi^+$ to $\Phi^-$ (\cite[\S 21, Exercise 6]{Hum72}. Suppose that we have \eqref{inner}. Then, since  $\psi$ is inner and it normalizes $\liet$, it gives an element of  $W$, namely $\rho$, so that $V(\lambda)^*\simeq V(\lambda)$ for all $\lambda$ and \eqref{selfdual}  holds. Conversely, if \eqref{selfdual} holds,  then $-\rho$ is the identity on the set of weights, hence   $\rho(\alpha)=-\alpha$ for all $\alpha\in\Phi$. It follows that $\rho\circ\psi$ is an automorphism  of $\lieg$ which is the identity on $\liet$ and sends $\lieg_\alpha$ to $\lieg_\alpha$ for all $\alpha$. Then  $\rho\circ\psi\in\Ad(T)$ so that $\psi$ is inner.
\end{proof}

\begin{theorem}\label{thm:main}
The group $G'/G$ has order $2$, generated by $-\psi$. 
\end{theorem}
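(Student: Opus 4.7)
The plan is to prove three claims: (a) every $\phi\in G'$ lies in $G\cup G\cdot(-\psi)$, (b) $-\psi$ actually belongs to $G'$, and (c) $-\psi$ does not belong to $G$. Taken together these force $G'=G\sqcup G\cdot(-\psi)$, giving $G'/G$ cyclic of order $2$ generated by the class of $-\psi$.

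Step (a) is a synthesis of the results already proved. Given $\phi\in G'$, Proposition 2.1 applied in the simple case writes $\phi=\varepsilon\sigma$ with $\varepsilon\in\{\pm 1\}$ and $\sigma\in\Aut(\lieg)$. By Corollary 2.2 (together with the preceding remarks that first arrange $\sigma(\liet)=\liet$), I may modify $\phi$ by an element of $G$ so that $\phi|_\liet=\mathrm{id}$; then $\sigma|_\liet=\varepsilon\cdot\mathrm{id}$, so $c_\sigma=\varepsilon$. Proposition 2.4 now yields $\sigma\in G$ when $\varepsilon=1$ (so $\phi\in G$), and $\sigma=\Ad(t)\circ\psi$ for some $t\in T$ when $\varepsilon=-1$, in which case $\phi=-\Ad(t)\circ\psi=\Ad(t)\circ(-\psi)\in G\cdot(-\psi)$. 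Step (c) is equally short: elements of $G=\Aut(\lieg)^0$ act on $\lieg$ as Lie algebra automorphisms, while $-\psi$ is not one, since $(-\psi)[x,y]=-[\psi(x),\psi(y)]$ differs in sign from $[(-\psi)(x),(-\psi)(y)]=[\psi(x),\psi(y)]$ whenever $[x,y]\neq 0$, and such pairs $x,y$ exist because $\lieg$ is simple.

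Step (b) is the only substantive verification, and I would carry it out via Chevalley's restriction theorem. Let $p\in\C[\lieg]^G$ be homogeneous of degree $d$. Since $\psi\in\Aut(\lieg)$ normalizes $G=\Aut(\lieg)^0$, the pullback $p\circ\psi$ is again $G$-invariant; its restriction to $\liet$, where $\psi$ acts as $-\mathrm{id}$, equals $(-1)^d\,p|_\liet$. Chevalley's theorem---that restriction $\C[\lieg]^G\to\C[\liet]^W$ is an isomorphism---upgrades this to the identity $p\circ\psi=(-1)^d p$ on all of $\lieg$. Consequently $p\circ(-\psi)=(-1)^d\,p\circ\psi=p$, so $-\psi$ preserves every $G$-invariant and hence lies in $G'$. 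This propagation from $\liet$ to $\lieg$ via Chevalley is the one real step beyond bookkeeping; the rest of the argument is a direct consequence of Proposition 2.1, Corollary 2.2, and Proposition 2.4.
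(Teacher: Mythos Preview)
Your proof is correct and follows essentially the same route as the paper: the reduction in step~(a) via Proposition~2.1, Corollary~2.2, and Proposition~\ref{prop:csigma} matches the paper's argument, and step~(c) simply spells out why $-\psi\notin\Aut(\lieg)$ where the paper leaves this implicit. In step~(b) the paper argues slightly more directly---observing that $-\psi$ itself restricts to the identity on~$\liet$ and applying Chevalley once---whereas you first compute $p\circ\psi=(-1)^d p$ and then cancel the sign with homogeneity; but this is only a cosmetic difference.
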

\begin{proof} 
If $\phi=\sigma\in\Aut(\lieg)$, then Proposition \ref{prop:csigma} shows that $\phi=\sigma\in G$. If $\phi=-\sigma$, then by Proposition \ref{prop:csigma} we may assume that $\phi=-\psi$. Now $-\psi$ induces an automorphism of $\C[\lieg]^G$ and $-\psi$ is the identity on $\liet$. Hence Chevalley's theorem shows that $-\psi\in G'$ and we know that $-\psi$ generates $G'/G$. Moreover, $-\psi$ is not in $\Aut(\lieg)$, so that $-\psi\not\in G$.
  \end{proof}
 
  \begin{corollary}\label{cor:main}
Suppose that $\psi$ is inner. Then $G'/G$ is generated by multiplication by $-1$.
\end{corollary}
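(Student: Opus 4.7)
The plan is to derive this immediately from Theorem \ref{thm:main} together with the very definition of $G$ as the adjoint group, i.e., $G=\Ad(\lieg)=\Aut(\lieg)^0$, whose Lie algebra coincides with the inner derivations.

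First I would observe that if $\psi$ is inner, then by definition $\psi\in\Aut(\lieg)^0=G$. Consequently the coset $-\psi\cdot G = (-1)\cdot\psi\cdot G = (-1)\cdot G$ inside $G'/G$. Since Theorem \ref{thm:main} asserts that $G'/G$ has order $2$ with generator $-\psi$, we conclude that $-\text{id}$ is a second generator of this cyclic group of order $2$.

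A small sanity check should be built in to make the argument complete: one must verify that $-\text{id}$ is indeed a nontrivial element of $G'/G$, i.e., that $-\text{id}\in G'\setminus G$. The membership $-\text{id}\in G'$ can be seen either directly from the coset identity above, or from Proposition \ref{prop:inner}, which under the assumption that $\psi$ is inner gives that every generator of $\C[\lieg]^G$ has even degree, so each such generator is invariant under $-\text{id}$. The nonmembership $-\text{id}\notin G$ follows because $-\text{id}$ is not a Lie algebra automorphism of $\lieg$: it fails to preserve the bracket, as $[-X,-Y]=[X,Y]\ne -[X,Y]$ for a generic pair $X,Y\in\lieg$ (here $\lieg$ is simple, hence nonabelian).

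No step of this argument is really an obstacle; the corollary is essentially a reformulation of Theorem \ref{thm:main} in the case when $\psi\in G$. The only conceptual point to highlight is the compatibility between the algebraic statement ``$\psi$ is inner'' and the geometric statement ``$-\text{id}$ preserves all invariants,'' which is exactly the content of the equivalence \eqref{inner}$\Leftrightarrow$\eqref{evendegree} in Proposition \ref{prop:inner}.
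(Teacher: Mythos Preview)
Your argument is correct and is essentially the same as the paper's: the paper states the corollary without proof, treating it as immediate from Theorem~\ref{thm:main}, and your derivation---that $\psi$ inner means $\psi\in G=\Aut(\lieg)^0$, hence the coset $-\psi\cdot G$ equals $(-1)\cdot G$---is exactly the intended one-line justification. The additional sanity checks you provide (that $-\mathrm{id}\in G'$ via Proposition~\ref{prop:inner}\eqref{evendegree} and that $-\mathrm{id}\notin\Aut(\lieg)$ since it reverses brackets) are correct and helpful elaborations, though not strictly needed once Theorem~\ref{thm:main} is in hand.
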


We leave it to the reader to formulate versions of Theorem \ref{thm:main} and Corollary \ref{cor:main} for the semisimple case.

  \section{The general case}
  We have a  finite dimensional vector space  $V$and $G$ is a reductive subgroup of $\GL(V)$. Let $G':=\{\phi\in\GL(V)\mid p\circ\phi=p \text{ for all }p\in\C[V]^G\}$. We show that, ``in general,'' we have $G'=G$.
  
  Let $U$ denote the subset of $V$ consisting of closed $G$-orbits with trivial stabilizer.   It follows from Luna's slice theorem \cite{Luna} that $U$ is open in $V$.
  \begin{theorem}\label{thm:G'=G}
Suppose that $V\setminus U$ is of codimension $2$ in $V$. Then $G'=G$.
  \end{theorem}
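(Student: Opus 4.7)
The plan is to show that any $\phi\in G'$ must coincide on the dense open set $U$ with the pointwise action of a regular map $V\to G$, and then to exploit linearity of $\phi$ to force that map to be constant.

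Since $\phi$ fixes every element of $\C[V]^G$, it preserves the fibers of the quotient map $\pi\colon V\to V/\!/G=\operatorname{Spec}\C[V]^G$. For $v\in U$ the fiber $\pi\inv(\pi(v))$ is exactly the orbit $Gv$: triviality of the stabilizer forces $\dim Gv=\dim G$, and any other orbit $Gu$ in the fiber would have $Gv$ in its closure and hence satisfy $\dim Gu>\dim Gv=\dim G$, which is impossible. Therefore $\phi(v)\in Gv$, and the triviality of the stabilizer yields a unique $g(v)\in G$ with $\phi(v)=g(v)\cdot v$; the same argument shows $\phi(U)\subseteq U$.

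To promote $g\colon U\to G$ to a morphism, I would invoke Luna's slice theorem, which tells us that $U\to U/G$ is a principal $G$-bundle, or equivalently that the action map $G\times U\to U\times_{U/G}U$, $(h,u)\mapsto(u,hu)$, is an isomorphism. Composing $(\operatorname{id},\phi|_U)\colon U\to U\times_{U/G}U$ with its inverse and projecting to $G$ exhibits $g$ as a morphism. Since $V$ is smooth and $V\setminus U$ has codimension at least $2$, Hartogs extension gives $\C[U]=\C[V]$, and since $G$ is affine this forces $g$ to extend to a morphism $\tilde g\colon V\to G$. The two morphisms $v\mapsto \phi(v)$ and $v\mapsto \tilde g(v)\cdot v$ agree on the dense set $U$, hence on all of $V$.

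Finally I would use linearity: for $v\in U$ and $\lambda\in\C^*$ we have $\lambda v\in U$ and
\[
\tilde g(\lambda v)\cdot(\lambda v)=\phi(\lambda v)=\lambda\phi(v)=\tilde g(v)\cdot(\lambda v),
\]
so triviality of the stabilizer of $v$ forces $\tilde g(\lambda v)=\tilde g(v)$. Thus $\tilde g$ is $\C^*$-invariant on the dense open $U$ and therefore on all of $V$, and for every $h\in\C[G]$ the function $h\circ \tilde g$ is a $\C^*$-invariant regular function on $V$, hence a constant. Since $\C[G]$ separates points of $G$, $\tilde g$ is itself a constant $g_0\in G$, and $\phi=g_0\in G$. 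The step I expect to be the main obstacle is the promotion of $g$ to a morphism, since it relies crucially on the principal-bundle structure on $U\to U/G$ coming from Luna's slice theorem; the codimension-$2$ extension and the final $\C^*$-homogeneity argument are then comparatively routine.
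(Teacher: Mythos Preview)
Your proof is correct and follows essentially the same strategy as the paper's: produce a morphism $g\colon U\to G$ with $\phi(v)=g(v)\cdot v$, extend it across the codimension-two complement by Hartogs, and then use linearity of $\phi$ to force the extended map to be constant. The paper's final step is phrased as a one-line limit computation, $\phi(x)=\lim_{t\to 0}\phi(tx)/t=\lim_{t\to 0}\psi(tx)\cdot x=\psi(0)\cdot x$, rather than your $\C^*$-invariance argument, but this is the same homogeneity idea in slightly different dress.
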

  
  \begin{proof}
Let $\phi\in G'$ and let $x\in U$. Then $\phi(x)=\psi(x)\cdot x$ where $\psi\colon U\to G$ is a well-defined morphism. Since $G$ is affine, we may consider $\psi$ as a mapping from $U\to G\subset\C^n$ for some $n$ where $G$ is Zariski closed in $\C^n$. Our condition on the codimension of $V\setminus U$ guarantees that each component of $\psi$ is a regular function on $V$, hence $\psi$ extends to a morphism defined on all of $V$, with image in $G$. Now let $x\in U$. Then
$$
\phi(x)=\lim_{t\to 0}\phi(tx)/t=\lim_{t\to 0}\psi(tx)tx/t=\psi(0)(x).
$$
Thus $\phi$ is just the action of $\psi(0)\in G$, so $G'=G$.
  \end{proof}
  
  \section{Representations of  SL$_2$ }
  
  As an illustration, we consider representations of $G=\SL_2$ or $G=\SO_3$. We only consider  representations with no nonzero fixed subspace. We let $R_j$ denote the irreducible representation of dimension $j+1$, $j\geq 0$, and $kR_j$ denotes the direct sum of $k$ copies of $R_j$, $k\geq 1$. When we have a representation only containing copies of $R_j$ for $j$ even, then we are considering representations of $\SO_3$. From \cite[11.9]{Sch95} we know that all representations of $G$  satisfy the hypotheses of Theorem \ref{thm:G'=G} except for the following cases, where we compute $G'$.
\begin{enumerate}
\item For $R_1$ we have $G'=\GL_2$, for $2R_1$ we have $G'=\Orth_4$ and for $3R_1$ we have $G'=G$.
\item For $R_2$ we have $G'=\Orth_3$ and for $2R_2$ we have $G'=\Orth_3$. (Here  $G=\SO_3$.)
\item For $R_2\oplus R_1$ we have $G'=\{g'\in\GL_2\mid \det(g')=\pm 1\}$.
\item For $R_3$  the group $G'$ is the same as in case (3).
\item For $R_4$ we have $G'=G=\SO_3$.
\end{enumerate}
Most of the calculations are easy, we mention some details for some of the non obvious cases.

Suppose that our representation is $R_4$, which has generating invariants of degrees 2 and 3. The Lie algebra $\lieg'$ acts irreducibly on $R_4$, hence it is the sum of a center and a semisimple Lie algebra \cite[Ch.\ II, Theorem 11]{Jac62}. Clearly we cannot have a nontrivial center, so that $\lieg'$ is semisimple. Now  a case by case check of the possibilities forces $\lieg'=\lieg$. Suppose that $g'\in G'\setminus G$. Then conjugation by $g'$ gives an inner automorphism of $G$, hence we can correct $g'$ by an element of $G$ so that $g'$ commutes with $G$. Thus $g'$ acts on $R_4$ as a scalar. But to preserve the invariants the scalar must be 1. Thus we have $G'=G$. Similar considerations give that $\lieg'=\lieg$ in case (4), so that $G'/G$ is generated by scalar multiplication by $ i$ (since the generating invariant of $R_3$ has degree 4), which shows that $G'$ is as claimed.

In case (3), one sees that $\lieg'=\lieg$, so that generators of $G'/ G$ act  as   scalars on $R_2$ and $R_1$. Now generators of the invariants have degrees $(2,0)$ and $(1,2)$ so that $G'/G$ is generated by an element  which is multiplication by $-1$ on $R_2$ and multiplication by $i$ on $R_1$. Hence  $G'$ is as claimed.

\newcommand{\noopsort}[1]{} \newcommand{\printfirst}[2]{#1}
  \newcommand{\singleletter}[1]{#1} \newcommand{\switchargs}[2]{#2#1}
  \def\cprime{$'$}
\providecommand{\bysame}{\leavevmode\hbox to3em{\hrulefill}\thinspace}
\providecommand{\MR}{\relax\ifhmode\unskip\space\fi MR }
\providecommand{\MRhref}[2]{%
  \href{http://www.ams.org/mathscinet-getitem?mr=#1}{#2}
}\providecommand{\href}[2]{#2}


\begin{thebibliography}{HWD06b}
 
 \bibitem[Dix79]{Dix79} J.\ Dixmier, \emph{Champs de vecteurs adjoints sur les groupes et alg\`ebres de Lie semi-simples}, J. Reine Angew. Math. 309 (1979), 183--190.
 
  \bibitem[Hum72]{Hum72} J.E. Humphreys, \emph{Introduction to Lie Algebras and Representation Theory}, Graduate Texts in Mathematics \textbf{9}, Springer-Verlag, New York, 1972. 
  
  \bibitem[Jac62]{Jac62} N. Jacobson, \emph{Lie algebras},  Interscience Publishers, New York, 1962.
   
     \bibitem[Lun73]{Luna} D.~Luna, \emph{Slices \'etales},  Bull.
Soc. Math. France, m\'emoire \textbf{33} (1973), 81--105.

\bibitem[Rai72]{Rais2} M. Ra{\"\i}s, \emph{Distributions homog\'enes sur des espaces de matrices}, Bull. Soc. Math. France, m\'emoire \textbf{30} (1972).
 
  \bibitem[Rai07]{Rais1} \bysame, \emph{Notes sur la notion d'invariant caract\'eristique}, http://arxiv.org/abs/0707.0782v1.
  
  \bibitem[Sch95]{Sch95} G.W. Schwarz, \emph{Lifting differential operators from orbit spaces}, Ann. Sci. Ecole Norm. Sup. \textbf{28} (1995), 253--306.

\bibitem[Sol05]{Sol1} S. Solomon, \emph{Irreducible linear group-subgroup pairs with the same invariants}, J. Lie Theory \textbf{15} (2005), no.~1,
  105--123.
\bibitem[Sol06]{Sol2} \bysame, \emph{Orthogonal linear group-subgroup pairs with the same invariants}, J. of Alg. \textbf{299} (2006), no.~2, 623--647.

\end{thebibliography}
\end{document}